\newtheorem{prop}{Proposition}
\newtheorem*{thm*}{Theorem}
\newtheorem*{prop*}{Proposition}
\newtheorem*{addendum*}{Addendum}
\newtheorem{cor}[prop]{Corollary}
\newtheorem{lem}[prop]{Lemma}
\newtheorem*{convention*}{Convention}
\theoremstyle{definition}
\newtheorem*{defn*}{Definition}
\newtheorem*{scholium*}{Scholium}
\theoremstyle{remark}
\newtheorem*{example*}{Example}
\numberwithin{equation}{section}
\newcommand{\vareps}{\varepsilon}
\newcommand{\RR}{\mathbf{R}}
\newcommand{\ZZ}{\mathbf{Z}}
\newcommand{\inv}{^{-1}}
\newcommand{\centra}{\mathscr{Z}}
\newcommand{\cat}{{\upshape CAT($0$)}\xspace}
\newcommand{\tangle}[2]
{\angle_\mathrm{T}(#1,#2)}
\newcommand{\aangle}[3]
{\angle_{#1}(#2,#3)}
\newcommand{\cangle}[3]
{\overline{\angle}_{#1}(#2,#3)}
\DeclareMathOperator{\rank}{rank}
\DeclareMathOperator{\Isom}{Is}
\def\rk{\mathrm{rank}}
\def\Min{\mathrm{Min}}
\begin{document}

\title{Regular elements in CAT(0) groups}

\author[P-E.~Caprace]{Pierre-Emmanuel Caprace{$^*$}}
\address{Universit\'e catholique de Louvain, IRMP, Chemin du Cyclotron 2, 1348 Louvain-la-Neuve, Belgium}
\email{pe.caprace@uclouvain.be}
\thanks{{$^*$} F.R.S.-FNRS Research Associate, supported in part by FNRS grant F.4520.11 and the European Research Council}

\author[G.~Zadnik]{Ga\v{s}per Zadnik{$^\star$}}
\address{In\v{s}titut za matematiko, fiziko in mehaniko, Jadranska ulica 19, SI-1111 Ljubljana, Slovenia}
\email{zadnik@fmf.uni-lj.si}
\thanks{$^\star$ Supported by the Slovenian Research Agency and in part by
the Slovene Human Resources Development and Scholarship Fund}

\date{December  2011}

\maketitle
\begin{abstract}
Let $X$ be a locally compact geodesically complete CAT(0) space and $\Gamma$ be a discrete group acting properly and cocompactly on $X$. We show that $\Gamma$ contains an element acting as a hyperbolic isometry on each indecomposable de Rham factor of $X$. It follows that if $X$ is a product of $d$ factors, then $\Gamma$ contains $\ZZ^d$. 
\end{abstract}

\bigskip

Let $X$ be a proper \cat space and $\Gamma$ be a discrete group acting properly and cocompactly by isometries on $X$. The \emph{flat closing conjecture} predicts that if $X$ contains a $d$-dimensional flat, then $\Gamma$ contains a copy of $\ZZ^d$ (see \cite[Section~6.B$_3$]{Gro}). In the special case $d=2$, this would imply that $\Gamma$ is hyperbolic if and only if it does not contain a copy of $\ZZ^2$. This notorious conjecture remains however open as of today. It holds  when $X$ is a real analytic manifold of non-positive sectional curvature by the main result of \cite{BS}. In the classical case when $X$ is a non-positively curved symmetric space, it can be established with the following simpler and well known argument: by \cite[Appendix]{BL}, the group $\Gamma$ must contain a so called \textbf{$\RR$-regular} semisimple element, \emph{i.e.} a hyperbolic isometry $\gamma$ whose axes are contained in a unique maximal flat of $X$. By a lemma of Selberg~\cite{Sel}, the centraliser $\centra_\Gamma(\gamma)$ is a lattice in the centraliser $\centra_{\Isom(X)}(\gamma)$. Since the latter centraliser is virtually $\RR^d$ with $d = \rk(X)$, one concludes that $\Gamma$ contains $\ZZ^d$, as desired. 

It is tempting to try and mimick that strategy of proof in the case of a general \cat space $X$: if one shows that $\Gamma$ contains a hyperbolic isometry $\gamma$ which is \textbf{maximally regular} in the sense that its axes are contained in a unique flat of maximal possible dimension among all flats of $X$, then the flat closing conjecture will follow as above. The main result of this note provides hyperbolic isometries satisfying a weaker notion of regularity. 

\begin{thm*}\label{regular}
Assume that $X$ is geodesically complete. 

Then $\Gamma$ contains a hyperbolic element which acts as a hyperbolic isometry on each indecomposable de Rham factor of $X$. 
\end{thm*}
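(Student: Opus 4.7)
My plan has three stages. \emph{First}, I would apply the de Rham decomposition of Foertsch--Lytchak to write $X = X_1 \times \cdots \times X_k$ as a product of indecomposable geodesically complete proper \cat factors (decomposing any Euclidean part further into lines). Since the decomposition is canonical up to permuting factors, a finite-index subgroup $\Gamma_0 \leq \Gamma$ preserves it, yielding a homomorphism $\phi \colon \Gamma_0 \to \prod_i \Isom(X_i)$ whose $i$-th projection $\phi_i$ has image acting cocompactly on $X_i$. If I find $\gamma \in \Gamma_0$ with $\phi_i(\gamma)$ hyperbolic on each $X_i$, then $\gamma$ itself is hyperbolic on $X$ (its axis is built by combining the axes on each factor), so it suffices to work with $\Gamma_0$.

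\emph{Second}, for each factor $X_i$, I would establish that $\phi_i(\Gamma_0)$ contains an element acting hyperbolically on $X_i$. On a line this is immediate from cocompactness. On an indecomposable non-Euclidean factor, the cocompact action on the geodesically complete space $X_i$ forces the existence of a hyperbolic isometry in $\phi_i(\Gamma_0)$ via the structure theory of cocompact \cat group actions (existence of rank-one or contracting isometries in the absence of a Euclidean splitting, in the spirit of Caprace--Monod).

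\emph{Third} and finally, I would combine these elements into a single $\gamma \in \Gamma_0$ hyperbolic on every factor, by induction on $k$. Given $\gamma$ already hyperbolic on $X_1, \ldots, X_{j-1}$ and $b \in \Gamma_0$ with $\phi_j(b)$ hyperbolic on $X_j$, I would replace $b$ by a suitable conjugate $g b g^{-1}$---using the cocompactness of $\Gamma_0$ on $X$ to position the axes on the factors $X_\ell$, $\ell < j$, generically with respect to the axes of $\gamma$---and then form a product $\gamma^n (g b g^{-1})^m$ for large $n, m$. A ping-pong style dynamical argument on the visual boundaries $\partial X_i$ should then give hyperbolicity on each $X_\ell$ for $\ell < j$ (with axis close to that of $\gamma$) and on $X_j$ (with axis close to that of $g b g^{-1}$). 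This combination step is the main obstacle: producing \emph{positive} translation length on each factor is straightforward, but ensuring that the infimum of the displacement is \emph{attained} on every factor (so that the element is hyperbolic rather than parabolic) is delicate in higher rank, because a small parabolic drift on one factor can destroy hyperbolicity of the product. Controlling this requires simultaneously exploiting the geodesic completeness of each factor, the cocompactness of $\Gamma_0$, and a careful boundary dynamics or genericity argument on $\prod_i \partial X_i$.
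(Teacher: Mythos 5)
Your outline founders at exactly the place you flag yourself: the combination step. But before that, step 2 is already under-justified. The projections $\phi_i(\Gamma_0)$ act cocompactly on $X_i$ but are in general neither discrete nor proper, so you cannot invoke results about discrete cocompact \cat groups; and ``rank-one/contracting isometries'' is the wrong tool, since an indecomposable factor may well be an irreducible higher-rank symmetric space or a Bruhat--Tits building, which admit no such isometries. Note also that your stated worry in step 3 is misdirected: since $\Gamma_0$ acts properly cocompactly on the proper space $X$, every element of $\Gamma_0$ is semisimple and so are its projections to the factors (because $\Min(\gamma)=\prod_i\Min(\gamma_i)$); parabolicity is not the enemy. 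The genuine enemy is an \emph{infinite-order elliptic} projection, which is possible precisely because $\phi_i(\Gamma_0)$ need not be discrete, and your ping-pong/genericity sketch offers no mechanism to exclude it: there is no argument showing that the attracting/repelling dynamics on one factor survive multiplication by an element whose projection to that factor is a badly behaved elliptic. This is where the proof actually requires new input.

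The paper closes this gap by treating two kinds of factors by genuinely different methods, after splitting $X\cong\RR^d\times M\times Y_1\times\dots\times Y_q$ with $M$ symmetric and each $\Isom(Y_i)$ totally disconnected. On the totally disconnected part, Swenson's limiting argument produces elements $h_{i,j}$ moving a point by an angle close to $\pi$, and \emph{Alexandrov angle rigidity} (a uniform lower bound $\vareps>0$ on the angle at the projection to the fixed-point set of any elliptic isometry, valid for totally disconnected cocompact groups on geodesically complete spaces --- this is where geodesic completeness enters) rules out ellipticity of any projection, yielding in one stroke an element hyperbolic on \emph{all} $Y_i$ simultaneously, with no induction over factors. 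The combination with $M$ is then achieved not by ping-pong but by an openness trick unavailable on connected factors: the set of isometries of $Y=Y_1\times\dots\times Y_q$ acting hyperbolically on every $Y_i$ contains a whole coset $Uh$ of the (compact open) pointwise stabiliser $U$ of a large ball; consequently $\Gamma\cap(\Isom(M)\times U)$ projects to a cocompact lattice in $\Isom(M)$, and the $\RR$-regular element machinery of Benoist--Labourie, Prasad--Raghunathan and the Borel density theorem produces an element of that lattice whose product with the chosen $\gamma_Y$ is $\RR$-regular on $M$ while staying in the good coset $Uh$ on the $Y$-side. Without some substitute for this openness --- or for the angle-rigidity input --- your inductive product construction cannot be completed.
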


Every \cat space $X$ as in the theorem admits a canonical de Rham decomposition, see \cite[Corollary~5.3(ii)]{CMST}. Notice that the number of indecomposable de Rham factors of $X$ is a lower bound on the dimension of all maximal flats in $X$, although two such maximal flats need not have the same dimension in general. As expected, we deduce a corresponding lower bound on the maximal rank of free abelian subgroups of $\Gamma$. 

\begin{cor}\label{cor:Z^d}
If $X$ is a product of $d$  factors, then $\Gamma$ contains a copy of $\ZZ^d$. 
\end{cor}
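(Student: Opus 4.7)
My plan is to combine the theorem with a structural analysis of the centraliser of the hyperbolic element it produces, in the spirit of the Selberg-style argument recalled in the introduction for the symmetric-space case.

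First I apply the theorem to obtain $\gamma \in \Gamma$ acting as a hyperbolic isometry on each indecomposable de Rham factor $X_1, \dots, X_d$ of $X$. Since the de Rham decomposition is canonical (\cite[Corollary~5.3(ii)]{CMST}), $\Isom(X)$ permutes the factors through a finite quotient, so after passing to a finite-index subgroup of $\Gamma$ preserving the decomposition and replacing $\gamma$ by a sufficiently divisible power, I may assume $\gamma = (\gamma_1, \dots, \gamma_d) \in \prod_i \Isom(X_i)$ with each $\gamma_i$ hyperbolic on $X_i$. The minset then decomposes as
\[
\Min(\gamma) \;=\; \prod_{i=1}^d \Min(\gamma_i) \;=\; \prod_{i=1}^d (Y_i \times \RR) \;=\; Y \times \RR^d,
\]
so that $X$ contains a $d$-dimensional flat on which $\gamma$ acts as the translation along the diagonal direction $(\ell_1, \dots, \ell_d)$, where $\ell_i$ is the translation length of $\gamma_i$.

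I then consider the centraliser $C := \centra_\Gamma(\gamma)$, which preserves $\Min(\gamma)$ together with its canonical product splitting $Y \times \RR^d$, inducing a projection $C \to \Isom(\RR^d)$ whose image centralises the diagonal translation above. The crucial step, which I expect to be the main obstacle of the proof, is to show that $C$ acts cocompactly on $\Min(\gamma)$. This is not a formal consequence of the cocompactness of $\Gamma$ on $X$, and I expect it to rely on the geodesic completeness hypothesis in an essential way, via cocompactness statements for centralisers of hyperbolic elements in cocompact \cat groups in the spirit of \cite{CMST}.

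Once cocompactness of the $C$-action on $\Min(\gamma)$ is secured, a splitting theorem for cocompact actions on \cat products with a Euclidean factor supplies a finite-index subgroup $C_0 \le C$ that factors as $C_0 = C_Y \times C_E$, with $C_E$ a discrete cocompact subgroup of $\Isom(\RR^d)$. Bieberbach's theorem then shows that $C_E$ is virtually free abelian of rank $d$, so $\Gamma$ contains a copy of $\ZZ^d$, as desired.
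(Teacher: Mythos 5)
Your proof follows essentially the same route as the paper's: the paper reduces the corollary to a lemma which computes $\Min(\gamma) \cong \RR^{d}\times C_1\times\dots\times C_p$ exactly as you do, then cites Ruane \cite[Theorem~3.2]{Rua} for the cocompactness of $\centra_\Gamma(\gamma)$ on $\Min(\gamma)$ and \cite[Theorem~3.8]{CMDS} for the splitting off of a virtual $\ZZ^d$ factor. The one step you single out as the main obstacle is precisely Ruane's theorem, which holds for any proper cocompact action of a discrete group on a proper \cat space and does not require geodesic completeness.
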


We believe that those results should hold without the assumption of geodesic completeness; in case $X$ is a \cat cube complex, this is indeed so, see \cite[\S 1.3]{CaSa}.

\medskip
The proof of the theorem and its corollary relies in an essential way on results from \cite{CMST} and \cite{CMDS}. The first step consists in applying \cite[Theorem~1.1]{CMST}, which ensures that $X$ splits as 
$$
X \cong \RR^d \times M \times Y_1 \times \dots \times Y_q,
$$ 
where $M$ is a symmetric space of non-compact type and the factors $Y_i$ are geodesically complete indecomposable \cat spaces whose full isometry group is totally disconnected. Moreover this decomposition is canonical, hence preserved by a finite index subgroup of $\Isom(X)$ (and thus of $\Gamma$). The next essential point is that, by  \cite[Theorem 3.8]{CMDS}, the group $\Gamma$ virtually splits as $\ZZ^d \times \Gamma'$, and the factor $\Gamma'$ (resp. $\ZZ^d$) acts properly and cocompactly on $M \times Y_1 \times \dots \times Y_q$ (resp. $\RR^d$). Therefore, our main theorem  is a consequence of the following. 

\begin{prop}\label{main}
Let $X = M \times Y_1 \times \dots \times Y_q$, where $M$ is a symmetric space of non-compact type and $Y_i$ is a geodesically complete locally compact \cat space with totally disconnected isometry group. 

Any discrete cocompact group of isometries of $X$  contains an element acting  as an $\RR$-regular hyperbolic element on $M$, and as a hyperbolic element on $Y_i$ for all $i$. 
\end{prop}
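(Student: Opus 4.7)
Let $G$ be the closure of $\Gamma$ in $\Isom(X)$. Since the de Rham decomposition is canonical by \cite[Corollary 5.3]{CMST}, after replacing $G$ and $\Gamma$ by finite-index subgroups we may assume that they preserve each factor of $X = M \times Y_1 \times \cdots \times Y_q$; this passage is harmless since any element of a finite-index subgroup satisfying the desired conclusion also lies in the original $\Gamma$. We then have coordinate projections $\pi_M$ and $\pi_i$, whose images have closures $G_M := \overline{\pi_M(\Gamma)}$ and $G_i := \overline{\pi_i(\Gamma)}$ acting cocompactly on the respective factors.

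For the symmetric space, $G_M$ acts cocompactly on $M$, and this forces $G_M$ to contain a finite-index subgroup of $\Isom(M)^\circ$; in particular, $G_M$ is a semisimple real Lie group of the same real rank as $M$. The $\RR$-regular elements form a non-empty Zariski-open subset $R \subseteq G_M$, and the Appendix of \cite{BL} guarantees that the Zariski-dense subgroup $\pi_M(\Gamma)$ meets $R$. By Benoist's density of Jordan projections, this intersection is in fact very rich.

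For each totally disconnected factor $Y_i$, the group $G_i$ acts continuously, properly and cocompactly on the indecomposable geodesically complete \cat space $Y_i$. By standard \cat theory (cf.\ \cite{CMST}), this action contains hyperbolic isometries; as the set $H_i$ of hyperbolic isometries of $G_i$ is open (translation length being continuous), density of $\pi_i(\Gamma)$ in $G_i$ yields hyperbolic elements in $\pi_i(\Gamma)$.

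The core difficulty is combining these three ingredients into a single $\gamma \in \Gamma$ with $\pi_M(\gamma) \in R$ and $\pi_i(\gamma) \in H_i$ for all $i$. I would proceed by a ping-pong-type argument. Select $\gamma_0 \in \Gamma$ with $\pi_M(\gamma_0) \in R$ and $\eta_i \in \Gamma$ with $\pi_i(\eta_i) \in H_i$; high powers $\gamma_0^{n_0}$ and $\eta_i^{n_i}$ exhibit strong north-south dynamics on the Furstenberg boundary of $M$ and on the visual boundary of $Y_i$, respectively. For suitably chosen exponents, the product $\gamma_0^{n_0}\eta_1^{n_1}\cdots\eta_q^{n_q}$ should inherit the $\RR$-regular behavior on $M$ from $\gamma_0^{n_0}$ (using openness of $R$ and dominance of its Jordan projection) while acquiring hyperbolic behavior on each $Y_i$ from $\eta_i^{n_i}$ (using openness of $H_i$ and dominance of its translation length). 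The main technical obstacle is the quantitative juggling required to meet all these dominance conditions simultaneously: one must verify that the perturbation introduced by the $\eta_j$-factors ($j \ne i$) does not destroy the hyperbolicity on $Y_i$, nor the $\RR$-regularity on $M$.
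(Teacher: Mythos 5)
Your plan correctly isolates the three ingredients, but two of its supporting claims are false and the combination step --- which is the actual content of the proposition --- is left unproved. First, the set $R$ of $\RR$-regular elements is \emph{not} Zariski open (already in $\SL_2(\RR)$ it is the non-algebraic set $|\mathrm{tr}|>2$); this is precisely why the appendix of \cite{BL} and \cite{P} are nontrivial. Second, "openness of $H_i$ because translation length is continuous" is not a valid argument: translation length is only upper semicontinuous in general, so $\{|g|>0\}$ need not be open. In the present setting hyperbolic elements do form an open set, but the reason is the open-stabiliser theorem \cite[Theorem~1.2]{CMST} (an element agreeing with a hyperbolic $h$ on the two endpoints and midpoint of an axis segment is itself hyperbolic, since the Alexandrov angle at the midpoint is $\pi$), not continuity of $|\cdot|$. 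Third, and most seriously, your ping-pong scheme has no foundation: a general hyperbolic isometry of a \cat space (e.g.\ one with $\Min \cong \RR \times C$, $C$ unbounded) does \emph{not} act with north--south dynamics on the visual boundary, and the product of a hyperbolic isometry with elliptic ones can perfectly well be elliptic. Moreover raising the exponents $n_j$ to "dominate" on $Y_j$ simultaneously increases the uncontrolled displacement the factors $\eta_j^{n_j}$ cause on $Y_i$ for $i\neq j$ and on $M$, so the quantitative juggling you defer is genuinely circular as stated.

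The paper resolves both difficulties differently. To get a \emph{single} element hyperbolic on all $Y_i$ at once (rather than one $\eta_i$ per factor), it proves this directly for the closed totally disconnected group $G = \overline{\pi_Y(\Gamma)}$ via Swenson's limiting-angle argument combined with \emph{Alexandrov angle rigidity} \cite[Proposition~6.8]{CMST}: if the regular hyperbolic elements $h_{i,j}=g_i^{-1}g_j$ all had an elliptic component on some subproduct $Y'$, the angles at the projected points would violate the uniform lower bound $\vareps$. This is where geodesic completeness enters and it cannot be bypassed by soft density arguments. To then combine with $\RR$-regularity on $M$, the paper replaces ping-pong by an algebraic device: it takes $U$ a compact open pointwise stabiliser of a large ball so that the entire coset $Uh$ is hyperbolic on each $Y_i$, notes that $\Gamma_U=\Gamma\cap(\Isom(M)\times U)$ projects to a cocompact lattice in $\Isom(M)$ \cite[Lemma~3.2]{CMDS}, and uses \cite[Lemma~3.5]{PR} (for a \emph{fixed} $\RR$-regular $\gamma_M$ there is a genuine Zariski-open $V$ with $\gamma_M^n\delta$ $\RR$-regular for $\delta\in V$, $n\gg 0$) together with Borel density to correct for the $M$-component $\alpha$ of $\gamma_Y$. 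The element $(\gamma_M')^{n_0}\delta'\gamma_Y$ then has both properties with no perturbation estimates needed. You would need to supply both of these mechanisms (or genuine substitutes) for your outline to become a proof.
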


As before, this yields a lower bound on the rank of maximal free abelian subgroups of $\Gamma$, from which Corollary~\ref{cor:Z^d} follows. 
 

\begin{cor}\label{cor:main}
Let $X = M \times Y_1 \times \dots \times Y_q$ be as in the proposition. 
Then any discrete cocompact group of isometries of $X$ contains a copy of $\ZZ^{\rk(M) + q}$. 
\end{cor}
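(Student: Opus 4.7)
The plan is to follow the Selberg-style argument sketched in the introduction, with the role of the maximal $\RR$-split torus in a semisimple Lie group played instead by the Euclidean de Rham factor of a suitable parallel set. Let $\gamma \in \Gamma$ be the element supplied by Proposition~\ref{main}, and write $\gamma = (\gamma_M, \gamma_1, \dots, \gamma_q)$. Since $\gamma_M$ is $\RR$-regular, its parallel set $\Min(\gamma_M)$ coincides with the unique maximal flat $F_M \subseteq M$ containing its axes, and has dimension $\rk(M)$. For each $i \in \{1, \dots, q\}$, the hyperbolicity of $\gamma_i$ furnishes the canonical splitting $\Min(\gamma_i) = Z_i \times \RR$ on which $\gamma_i$ acts by translation along the $\RR$-factor. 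Taking products,
$$
\Min(\gamma) \;=\; F_M \times (Z_1 \times \RR) \times \dots \times (Z_q \times \RR) \;=\; E \times Z,
$$
where $E \cong \RR^{\rk(M)+q}$ is the Euclidean de Rham factor of $\Min(\gamma)$.

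The main technical step would then be to establish that the centralizer $C := \centra_\Gamma(\gamma)$ acts properly and cocompactly on the closed convex subspace $\Min(\gamma) \subseteq X$. Properness is inherited from $\Gamma \curvearrowright X$; cocompactness is the \cat counterpart of Selberg's lemma and is the principal obstacle in the plan. Concretely, given a compact fundamental domain $K$ for $\Gamma \curvearrowright X$, one seeks for every $x \in \Min(\gamma)$ an element $\sigma \in \Gamma$ bringing $x$ into a uniformly bounded neighbourhood of a chosen basepoint of $\Min(\gamma)$, some power of which commutes with $\gamma$; this ultimately boils down to arguing that the axis of $\sigma$ is parallel to that of $\gamma$ and is classical in the discrete cocompact \cat framework.

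Granting the cocompactness of $C \curvearrowright \Min(\gamma)$, the proof is concluded by invoking \cite[Theorem~3.8]{CMDS} a second time, now applied to $C$ acting on $\Min(\gamma) = E \times Z$: the canonical Euclidean/non-Euclidean splitting is preserved by a finite index subgroup of $C$, whence $C$ virtually splits as $\ZZ^{\rk(M)+q} \times C'$ with the $\ZZ^{\rk(M)+q}$-factor acting cocompactly by translations on $E$. This produces the required copy of $\ZZ^{\rk(M)+q}$ inside $\Gamma$. A subsidiary point is to verify that $\Min(\gamma)$ inherits from $X$ enough geodesic completeness to make \cite[Theorem~3.8]{CMDS} applicable.
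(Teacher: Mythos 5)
Your proposal follows essentially the same route as the paper: form $\Min(\gamma)=\RR^{\rk(M)+q}\times Z$ using the $\RR$-regularity of $\gamma_M$ and the hyperbolicity of each $\gamma_i$, show the centraliser $\centra_\Gamma(\gamma)$ acts properly cocompactly on $\Min(\gamma)$, and split off $\ZZ^{\rk(M)+q}$ via \cite[Theorem~3.8]{CMDS}. The one step you flag as the ``principal obstacle'' --- cocompactness of $\centra_\Gamma(\gamma)$ on $\Min(\gamma)$ --- is indeed the classical fact you suspect it to be; the paper simply cites \cite[Theorem~3.2]{Rua} for it rather than reproving it.
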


\begin{proof}
Let $\Gamma < \Isom(X)$ be a discrete subgroup acting cocompactly. Upon replacing $\Gamma$ by a subgroup of finite index, we may assume that $\Gamma$ preserves the given product decomposition of $X$ (see \cite[Corollary~5.3(ii)]{CMST}).  Let $\gamma \in \Gamma$ be as in Proposition~\ref{main} and let $\gamma_M$ (resp. $\gamma_i$) be its projection to $\Isom(M)$ (resp. $\Isom(Y_i)$). Then $\Min(\gamma_M) = \RR^{\rank(M)}$ and for all $i$ we have $\Min(\gamma_i) \cong \RR \times C_i$ for some \cat space $C_i$, by \cite[Theorem~II.6.8(5)]{BH}. Hence the desired conclusion follows from the following lemma. 
\end{proof}


\begin{lem}
Let $X = X_1 \times \dots \times X_p$ be a proper \cat space and $\Gamma$ a discrete group acting properly cocompactly on $X$. Let also $\gamma \in \Gamma$ be an element preserving some $d_i$-dimensional flat in $X_i$ on which it acts by translation, for all $i$. 

Then $\Gamma$ contains a free abelian group of rank  $d_1 + \dots + d_p$. 
\end{lem}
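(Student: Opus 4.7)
Set $F := F_1 \times \cdots \times F_p \subseteq X$ and $d := d_1 + \cdots + d_p$. Since $X$ carries the product \cat metric, $F$ is a $d$-dimensional flat, and $\gamma$ acts on $F$ by the translation with vector $(v_1, \ldots, v_p) \in \RR^{d_1} \times \cdots \times \RR^{d_p} \cong F$, where $v_i$ is the translation vector of $\gamma|_{X_i}$ on $F_i$. This packages the product hypothesis into a single flat and reduces the lemma to the single-factor statement: if $\Gamma$ acts properly cocompactly on a proper \cat space $X$ and some $\gamma \in \Gamma$ translates a $d$-dimensional flat $F \subseteq X$, then $\Gamma$ contains $\ZZ^d$.

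I would then proceed by induction on $d$. The base case $d = 1$ is immediate, since $\la \gamma \ra \cong \ZZ$. For the inductive step, invoke \cite[Theorem~II.6.8(5)]{BH} to obtain the canonical isometric splitting $\Min(\gamma) \cong \RR \times C$, in which $\gamma$ acts as a non-trivial translation along the $\RR$-factor and trivially on the \cat space $C$. Since $F \subseteq \Min(\gamma)$ and the translation direction of $\gamma$ on $F$ aligns with the $\RR$-factor, the flat $F$ identifies with $\RR \times F'$ for some flat $F' \subseteq C$ of dimension $d-1$.

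The key non-trivial step is to show that the setwise stabilizer $N := \Stab_\Gamma(\Min(\gamma))$ acts properly cocompactly on $\Min(\gamma)$. This uses the discreteness and cocompactness of $\Gamma \curvearrowright X$: an element $g \in \Gamma$ mapping a point of $\Min(\gamma)$ to another point of $\Min(\gamma)$ sends $\Min(\gamma)$ to the parallel minimal set $\Min(g \gamma g^{-1})$, and discreteness of $\Gamma$ bounds the local multiplicity of such parallel Min-sets, so that after passing to a finite-index subgroup one obtains cocompactness. Granted this, \cite[Theorem~3.8]{CMDS} applied to the Euclidean $\RR$-factor of $\Min(\gamma) = \RR \times C$ yields a virtual direct-product decomposition $N \sim \ZZ \times N'$, with the $\ZZ$-factor containing a power of $\gamma$ and acting cocompactly on $\RR$, and $N'$ acting properly cocompactly on $C$.

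Applying the inductive hypothesis to $N' \curvearrowright C$ with the $(d-1)$-flat $F'$ would then give $\ZZ^{d-1} \le N'$, and combined with the $\ZZ$-factor this produces the required $\ZZ^d \subseteq N \subseteq \Gamma$. The hardest step is supplying the element of $N'$ that translates $F'$, needed for the inductive application: since $\gamma$ itself acts trivially on $C$, it does not furnish such an element directly. One must instead exhibit $\Gamma$-conjugates of $\gamma$ whose Min-sets are parallel to $\Min(\gamma)$; through the virtual splitting $N \sim \ZZ \times N'$ these descend to non-trivial isometries of $C$, and a careful geometric analysis is required to isolate one that translates $F'$. This production of the translating element is the main obstacle in the plan.
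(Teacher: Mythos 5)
Your plan has a genuine gap, and you have correctly located it yourself: the induction cannot be closed. After splitting $\Min(\gamma) \cong \RR \times C$ and obtaining $N \sim \ZZ \times N'$ with $N'$ acting properly cocompactly on $C$, you need an element of $N'$ translating the $(d-1)$-flat $F'$ in order to apply the inductive hypothesis, and nothing in your argument produces one ($\gamma$ itself dies in $C$). Exhibiting a group element that translates a prescribed flat is essentially the content of the flat closing conjecture, so this missing step is not a technical detail but the entire difficulty. Two further problems. First, your justification that $\Stab_\Gamma(\Min(\gamma))$ acts cocompactly on $\Min(\gamma)$ does not work as sketched: an element $g\in\Gamma$ carrying one point of $\Min(\gamma)$ to another sends $\Min(\gamma)$ to $\Min(g\gamma g^{-1})$, which meets $\Min(\gamma)$ but has no reason to be parallel to it. The correct statement is Ruane's theorem \cite[Theorem~3.2]{Rua}, which concerns the \emph{centraliser} $\centra_\Gamma(\gamma)$ and is proved by pigeonholing the conjugates $g\gamma g^{-1}$, whose displacement at a fixed basepoint is bounded. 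Second, your very first reduction --- replacing the $p$ flats $F_i$ by the single flat $F$ and forgetting the factors --- discards exactly the structure that makes the lemma provable: the single-factor statement you reduce to (``$\gamma$ translates a $d$-flat, hence $\Gamma$ contains $\ZZ^d$'') is not known in this generality.

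The paper's proof needs no induction and never has to exhibit further translating elements. The point is that $\Min(\gamma) = \Min(\gamma_1)\times\dots\times\Min(\gamma_p)$ and each $\Min(\gamma_i)$ splits off a Euclidean factor $\RR^{d_i}$ (this is where the factorwise hypothesis enters), so that $\Min(\gamma) \cong \RR^{d_1+\dots+d_p}\times C_1\times\dots\times C_p$ exhibits the full rank $d=d_1+\dots+d_p$ in the Euclidean de Rham factor at once. By Ruane's theorem the centraliser $\centra_\Gamma(\gamma)$ acts properly and cocompactly on $\Min(\gamma)$, and \cite[Theorem~3.8]{CMDS} --- the Bieberbach-type theorem on the Euclidean factor, the same result you invoke for a single $\RR$ --- then yields $\ZZ^{d}$ as a virtual direct factor of $\centra_\Gamma(\gamma)$ in one step. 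In short, the free abelian subgroup is manufactured wholesale by the Euclidean-factor theorem; you used that theorem only to peel off one $\ZZ$ at a time, which is why you were left hunting for the next translating element.
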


\begin{proof}
By assumption $\gamma$ preserves the given product decomposition of $X$. We let   $\gamma_i$ denote the projection of $\gamma$ on  $\Isom(X_i)$.  Observe that 
$$\Min(\gamma) = \Min(\gamma_1) \times \dots \times \Min(\gamma_p).$$ 
 By hypothesis, we have $\Min(\gamma_i) \cong \RR^{d_i} \times C_i$ for some \cat space $C_i$. 
Therefore $\Min(\gamma) \cong \RR^{d_1 + \dots + d_p} \times C_1 \times \dots \times C_p$. By~\cite[Theorem 3.2]{Rua} the centraliser $\centra_\Gamma(\gamma)$ acts cocompactly (and of course properly) on $\Min(\gamma)$. Therefore, invoking \cite[Theorem 3.8]{CMDS}, we infer  that   $\ZZ^{d_1 + \dots + d_p}$ is a (virtual) direct factor of $\centra_\Gamma(\gamma)$. 
\end{proof}


\medskip

It remains to prove Proposition~\ref{main}. We  proceed in three steps. The first one provides an element $\gamma_Y \in \Gamma$ acting as a hyperbolic isometry on each $Y_i$. This combines an argument of E.~Swenson~\cite[Theorem 11]{Swe} with the phenomenon of \textbf{Alexandrov angle rigidity}, described in \cite[Proposition~6.8]{CMST} and recalled below. The latter requires the hypothesis of geodesic completeness. The second step uses that $\Gamma$ has  subgroups acting properly cocompactly on $M$, and thus contains an element $\gamma_M$ acting as an $\RR$-regular isometry of $M$ by \cite{BL}. The last step uses a result from \cite{PR} ensuring that for all elements $\delta'$ in some Zariski open subset of $\Isom(M)$ and all sufficiently large $n>0$, the product  $\gamma_M^n \delta'$ is $\RR$-regular. Invoking the Borel density theorem, we finally find an appropriate element $\delta \in \Gamma$ such that the product $\gamma = \gamma_M^n \delta \gamma_Y$ has the requested properties. We now proceed to the details. 

\begin{prop*}[\textbf{Alexandrov angle rigidity}]
Let $Y$ be a locally compact geodesically complete \cat space and $G$ be a totally disconnected locally compact group acting continuously, properly and cocompactly on $Y$ by isometries. 

Then there is $\vareps >0$ such that for any elliptic isometry $g \in G$ and any $x \in X$ not fixed by $g$, we have $\aangle c {gx} x \geq \vareps$, where $c$ denotes the projection of $x$ on the set of $g$-fixed points.
\end{prop*}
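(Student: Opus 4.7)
The plan is to argue by contradiction. Suppose there exist sequences of elliptic elements $g_n\in G$ and points $x_n\in Y\setminus\Fix(g_n)$ with $\alpha_n:=\aangle{c_n}{g_nx_n}{x_n}\to 0$, where $c_n:=\proj_{\Fix(g_n)}(x_n)$. Since the Alexandrov angle at $c_n$ depends only on the germs of the two geodesics issuing from $c_n$, I may normalise $d(c_n,x_n)=1$ by replacing $x_n$ with the point at unit distance from $c_n$ along $[c_n,x_n]$. Using cocompactness, I further replace each triple $(g_n,x_n,c_n)$ by an appropriate $G$-translate so that $c_n$ lies in a fixed compact subset of $Y$, and extract a subsequence with $c_n\to c$ and $x_n\to x$, $d(c,x)=1$. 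From $g_nc_n=c_n\to c$ and properness of the $G$-action, the sequence $(g_n)$ is relatively compact in $G$; after a further extraction $g_n\to g_\infty$ with $g_\infty c=c$.

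The key structural input is the fact, established in \cite{CMST} using geodesic completeness, that every point stabiliser in $G$ is compact \emph{and} open. Thus $G_c$ is a compact open subgroup of $G$, so $g_n\in G_c$ for all large $n$; such $g_n$ fix $c$ and act isometrically on the space of directions $\Sigma_c$. Using geodesic completeness, extend $[c,x]$ to an infinite geodesic ray with initial direction $v\in\Sigma_c$. Since $G_x$ is open in $G$ by the same CMST result, the $G_c$-stabiliser of $v$ (which coincides with $G_c\cap G_x$) is open in the compact group $G_c$; consequently the orbit $G_c\cdot v\subseteq\Sigma_c$ is finite, with minimum positive angular distance $\delta>0$ between distinct elements.

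The main step is to show $g_nv=v$ for large $n$; this will force $g_nx=x$ (since $g_n$ is an isometry fixing $c$ with $d(c,g_nx)=d(c,x)=1$). By continuity of the isometric action of $G_c$ on $\Sigma_c$, together with $v_n:=[c,x_n]\to v$ and $g_n\to g_\infty$, one has $\aangle{c}{x_n}{g_nx_n}=d_{\Sigma_c}(v_n,g_nv_n)\to\aangle{c}{x}{g_\infty x}$. The decisive sub-step is to transfer the vanishing of $\alpha_n=\aangle{c_n}{x_n}{g_nx_n}$ at the moving basepoints $c_n$ to the vanishing of $\aangle{c}{x}{g_\infty x}$ at $c$; once this is done, $d_{\Sigma_c}(v,g_nv)\to 0$ combined with the discreteness of $G_c\cdot v$ forces $g_nv=v$ for $n\gg 0$. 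The contradiction then follows from the projection property: $x\in\Fix(g_n)$ yields $1=d(x_n,c_n)\le d(x_n,x)\to 0$.

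The principal obstacle is the angle-transfer sub-step. Alexandrov angles are only upper semicontinuous, so $\alpha_n\to 0$ does not directly imply $\aangle{c}{x}{g_\infty x}=0$. The idea is to combine geodesic completeness (extending the germs $[c_n,x_n]$ at $c_n$ to full rays, so that local comparison geometry at the moving basepoints can be controlled along a fixed scale) with the projection property $\aangle{c_n}{x_n}{c}\ge\pi/2$, which holds for large $n$ since $c\in\Fix(g_n)$. A careful CAT(0) comparison argument exploiting these two ingredients then forces the limiting angle at $c$ to vanish, closing the proof.
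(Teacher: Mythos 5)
The paper does not actually prove this proposition: it quotes it verbatim from \cite{CMST}*{Proposition 6.8}, so your attempt can only be judged on its own terms. The soft ingredients you assemble are sound and of the right kind: conjugating so that the $c_n$ stay in a compact set, properness to extract $g_n\to g_\infty$, openness and compactness of point stabilisers from \cite{CMST}*{Theorem 1.2}, and finiteness of the orbit $G_c\cdot v$ in $\Sigma_c$. But the proof is not complete, and the missing steps are exactly where the content lies.

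First, the angle-transfer step that you yourself flag as ``the principal obstacle'' is left entirely unproved, and it is not a technicality. The Alexandrov angle $\alpha_n=\aangle{c_n}{x_n}{g_nx_n}$ is an \emph{infimum} of comparison angles, so it only yields the lower bound $d(x_n,g_nx_n)\geq 2\sin(\alpha_n/2)$ and gives no upper control at any definite scale: the smallness of $\alpha_n$ may be realised only at scales $t_n\to 0$, which your normalisation $d(c_n,x_n)=1$ and the subsequent limits $x_n\to x$, $g_n\to g_\infty$ cannot detect. In the limit one may perfectly well have $d(x,g_\infty x)$ bounded away from $0$ and $\aangle{c}{x}{g_\infty x}$ large; asserting that ``a careful CAT(0) comparison argument then forces the limiting angle to vanish'' is asserting precisely what has to be proved. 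Second, even granting $g_nv=v$ in $\Sigma_c$, the conclusion $g_nx=x$ does not follow from ``$g_n$ fixes $c$ and $d(c,g_nx)=d(c,x)=1$'': in a CAT(0) space two distinct points at distance $1$ from $c$ can subtend Alexandrov angle $0$ at $c$ (in a tree, take two segments issuing from $c$ that coincide up to a branch point $b$ and then diverge). An isometry can fix $c$ and the direction $v$ while moving $x$; the reason such a configuration does not contradict the proposition is that the nearest fixed point to $x$ is then $b$ rather than $c$ --- which shows that your passage from the moving projection points $c_n$ to the limit point $c$ discards the very hypothesis ($c_n$ is the projection of $x_n$ onto $\Fix(g_n)$) that makes the statement true. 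A correct proof has to work at the scales where the small angles are realised, using the structure of spaces of directions and tangent cones of geodesically complete cocompact spaces; this is the substance of the argument in \cite{CMST} and is absent here.
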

\begin{proof}
See \cite[Proposition~6.8]{CMST}.
\end{proof}

\begin{prop}\label{prop:TodDisc}
Let $Y = Y_1 \times \dots \times Y_q$, where $Y_i$ is a geodesically complete locally compact \cat space with totally disconnected isometry group, and $G$ be a locally compact group acting continuously, properly and cocompactly by isometries on $Y$. 

Then $G$ contains an element acting on $Y_i$ as a hyperbolic isometry for all $i$.
\end{prop}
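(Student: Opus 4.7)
The plan is to iteratively enlarge the set $S \subseteq \{1, \ldots, q\}$ on which some element of $G$ acts hyperbolically, starting from a nonempty $S$ and reaching $S = \{1, \ldots, q\}$ in finitely many steps. Two preliminary facts are needed. First, for each $i$, the isometry group $\Isom(Y_i)$, equipped with the compact-open topology, is a totally disconnected locally compact group acting continuously, properly, and cocompactly on $Y_i$: cocompactness follows because a compact $G$-fundamental domain in $Y$ projects onto one for $\Isom(Y_i)$ in $Y_i$. Consequently the Alexandrov angle rigidity applies to $\Isom(Y_i) \curvearrowright Y_i$, yielding a uniform $\varepsilon_i > 0$. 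Moreover every element of $\Isom(Y_i)$ is semi-simple, so positive infimum displacement is equivalent to hyperbolicity.

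Second, for each $j$, I claim that $G$ must contain some element whose projection to $\Isom(Y_j)$ is hyperbolic. Otherwise the closure $H_j$ of $\pi_j(G)$ in $\Isom(Y_j)$ would act properly and cocompactly on $Y_j$ while consisting entirely of elliptic elements---ellipticity being closed in this setting, since translation length is continuous and every element of $\Isom(Y_j)$ is semi-simple. Swenson's Theorem 11 applied to $H_j \curvearrowright Y_j$ would then either force a hyperbolic element, contradicting our assumption, or produce a global fixed point at infinity of $Y_j$; the latter would yield a nontrivial Euclidean de Rham factor of $Y_j$, incompatible with $\Isom(Y_j)$ being totally disconnected.

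With these tools in hand, start with $g \in G$ whose projection to $\Isom(Y_{i_0})$ is hyperbolic for some $i_0$, and set $S = \{i : g_i \text{ hyperbolic}\}$. Inductively, assuming $S \subsetneq \{1, \ldots, q\}$, pick $j \notin S$ and choose $h \in G$ with $h_j$ hyperbolic. The goal is to show that $g^N h$ is hyperbolic on every factor in $S \cup \{j\}$ for $N$ sufficiently large. On factors $i \in S$, taking a base point $x$ on the axis of $g_i$, the triangle inequality gives $d(x, g_i^N h_i x) \geq N |g_i| - d(x, h_i x)$, positive for large $N$; semi-simplicity then promotes $(g^N h)_i$ to a hyperbolic isometry of $Y_i$. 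On $Y_j$, a base point on the axis of $h_j$ gives $d(x, g_j^N h_j x) \geq |h_j| - d(h_j x, g_j^N h_j x)$, and the Alexandrov angle rigidity is used to prevent the elliptic rotation $g_j^N$ from closing up the translation $h_j$ arbitrarily well. Iterating strictly enlarges $S$ until it equals $\{1, \ldots, q\}$. The main obstacle lies precisely in this displacement estimate on $Y_j$: establishing a uniform lower bound on $|g_j^N h_j|$ as $N$ varies, using the quantitative angle control from Alexandrov angle rigidity, is the technical heart of the argument.
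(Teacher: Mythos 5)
Your overall strategy (induct on the set $S$ of factors where hyperbolicity has been achieved, and enlarge $S$ by forming products $g^N h$) is genuinely different from the paper's, but as written it has two real gaps, one of which you acknowledge is the entire difficulty. First, the inference ``$d(x,(g^Nh)_i x)>0$ at one point, plus semisimplicity, promotes $(g^Nh)_i$ to a hyperbolic isometry'' is false: an elliptic isometry is semisimple and has positive displacement at every point outside its fixed-point set, so a lower bound on the displacement at a single point says nothing about the translation length. To conclude hyperbolicity you must bound $|(g^Nh)_i|=\lim_k d(x,(g^Nh)_i^k x)/k$ from below (or verify an angle criterion along an orbit), and your triangle-inequality estimate does not do this even on the factors $i\in S$. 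Second, the step you call the ``technical heart''---a uniform lower bound on $|g_j^N h_j|$ for the factor being added---is exactly where the theorem lives, and it is not carried out. Moreover the statement you aim for cannot hold in the form ``for all sufficiently large $N$'': if $g_j$ has order $2$, then $g_j^N h_j$ alternates between $h_j$ and $g_jh_j$, and the latter may perfectly well be elliptic, so at best one could hope for infinitely many $N$. A smaller soft spot: your closedness-of-ellipticity argument rests on continuity of the translation length, which is only upper semicontinuous in general; in this setting the needed closedness is itself a consequence of Alexandrov angle rigidity rather than of continuity.

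For comparison, the paper avoids the induction entirely. It fixes a \emph{regular} geodesic ray $\rho$ (one whose projection to every $Y_i$ is still a ray), uses cocompactness to produce elements $h_{i,j}=g_i^{-1}g_j$ for which the Alexandrov angle at $\rho(t_i)$ between $h_{i,j}^{-1}\rho(t_i)$ and $h_{i,j}\rho(t_i)$ is arbitrarily close to $\pi$ (Swenson's argument), and then observes that, because $\rho$ is regular, this near-$\pi$ angle survives projection to the subproduct $Y'$ of all coordinates on which $h_{i,j}$ is elliptic. Comparing angles at the projection of $\rho'(t_i)$ to $\Fix(h')$ then contradicts Alexandrov angle rigidity, so no such elliptic coordinates exist and $h_{i,j}$ is hyperbolic on every factor simultaneously. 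Note that the quantitative control you are missing is precisely this near-$\pi$ angle at a moved point: it is what converts ``positive displacement'' into ``not elliptic,'' and it is obtained for a single well-chosen element rather than for a product of two elements constructed on different factors. If you want to salvage your inductive scheme, you would need an analogous angle estimate for $g_j^Nh_j$ at a suitable base point, which is not easier than the paper's direct argument.
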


\begin{proof}
Upon replacing $G$ by a finite index subgroup, we may assume that $G$ preserves the given product decomposition of $Y$, see \cite[Corollary~5.3(ii)]{CMST}. Let $\rho : [0, \infty) \to Y$ be a geodesic ray which is \textbf{regular}, in the sense that its projection to each $Y_i$ is a ray (in other words the end point $\rho(\infty)$ does not belong to the boundary of a subproduct). 

Since $G$ is cocompact, we can find a sequence $(g_n)$  in $G$ and a sequence $(t_n)$   in $\RR_+$ such that   $g_n.\rho(t_n)$ converges to some point $y \in Y$ and $g_n.\rho$ converges uniformly on compacta to a geodesic line $\ell$ in $Y$. Set $h_{i,j} =g_i^{-1}g_j \in G$ and consider the angle
$$\theta =\angle_{\rho(t_i)}(h_{i,j}^{-1}.\rho(t_i),h_{i,j}.\rho(t_i)).$$ 
As in \cite[Theorem 11]{Swe}, observe that $\theta$ is arbitrarily close to $\pi$ for $i<j$ large enough. 

We shall prove that for all $i<j$ large enough, the isometry $h_{i, j}$ is regular hyperbolic, in the sense that its projection to each factor $Y_k$ is hyperbolic. We argue by contradiction and assume that this is not the case. Notice that $\Isom(Y_k)$ does not contain any parabolic isometry by~\cite[Corollary~6.3(iii)]{CMST}. Therefore, upon extracting and reordering the factors, we may then assume that there is some $s \leq q$ such that for all $i<j$, the projection of $h_{i, j}$  on $\Isom(Y_1), \dots, \Isom(Y_s)$ is elliptic, and the projection of $ h_{i, j}$  on $\Isom(Y_{s+1}), \dots, \Isom(Y_q)$ is hyperbolic. 
We set $Y' = Y_1 \times \dots \times Y_s$ and $Y'' = Y_{s+1} \times \dots \times Y_q$. We shall prove that for $i<j$ large enough, the projections of $(h_{i, j})$ on $\Isom(Y')$ forms a sequence of elliptic isometries which contradict Alexandrov angle rigidity. 

Fix some small $\delta >0$. Let $x_i$  (resp. $y_i$) be the point at distance $\delta$ from $\rho(t_i)$ and lying on the geodesic segment   $[h_{i,j}^{-1}.\rho(t_i),\rho(t_i)]$ (resp. $[\rho(t_i),h_{i,j}.\rho(t_i)]$). By construction, for $i<j$ large enough, the union of the two geodesic segments $[x_i, \rho(t_i)] \cup [\rho(t_i), y_i]$ lies in an arbitrary small tubular neighbourhood of the geodesic ray $\rho$. 
Since the projection $Y \to Y'$ is $1$-Lipschitz, it follows that the $Y'$-component of $[x_i, \rho(t_i)] \cup [\rho(t_i), y_i]$, which we denote by $[x'_i, \rho'(t_i)] \cup [\rho'(t_i), y'_i]$, is uniformly close to the $Y'$-component of $\rho$, say $\rho'$. Since $\rho$ is a regular ray, its projection $\rho'$ is also a geodesic ray. Therefore, the angle 
$$
\theta' =\angle_{\rho'(t_i)}(x'_i,y'_i)
$$
is arbitrarily close to $\pi$ for $i < j$ large enough. Pick $i <j$ so large that $\theta' > \pi - \vareps $, where $\vareps > 0$ is the constant from Alexandrov angle rigidity for $Y'$. Set $h = h_{i, j}$ and let $h'$ be the projection of $h$ on $\Isom(Y')$. By assumption $h'$ is elliptic. Let $c$ denote the projection of $\rho'(t_i)$ on the set of $h'$-fixed points. Then the isosceles triangles $\triangle \big(c, (h')^{-1}.\rho'(t_i),\rho'(t_i)\big)$ and $\triangle \big(c, \rho'(t_i),h'.\rho'(t_i)\big)$ are congruent, and we deduce   
$$
\begin{array}{rcl}
\angle_{c}(\rho'(t_i), h'.\rho'(t_i)) &\leq  &\pi - \aangle {\rho'(t_i)} c {h'.\rho'(t_i)} - \aangle {\rho'(t_i)} c {(h')\inv.\rho'(t_i)}\\
&  \leq & \pi - \angle_{\rho'(t_i)}((h')^{-1}.\rho'(t_i),h'.\rho'(t_i))\\
& = & \pi - \theta'\\
&  <&\vareps.
\end{array}
$$ 
This contradicts Alexandrov angle rigidity.
\end{proof}

\begin{proof}[Proof of Proposition~\ref{main}]
Let $\Gamma$ be a discrete group acting properly and cocompactly on $X$. 
First observe that (after passing to a finite index subgroup) we may  assume that $\Gamma$ preserves  the given product decomposition of $X$, see see \cite[Corollary~5.3(ii)]{CMST}. 

Let $G$ be the closure of the projection of $\Gamma$ to $\Isom(Y_1) \times \dots \times \Isom(Y_q)$. Then $G$ acts properly cocompactly on $Y = Y_1 \times \dots \times Y_q$. Therefore it contains an element $g$ acting as a hyperbolic isometry on $Y_i$ for all $i$ by Proposition~\ref{prop:TodDisc}. Since $\Gamma$ maps densely to $G$ and since the stabiliser of each point of $Y$ in $G$ is open by \cite[Theorem~1.2]{CMST}, it follows that $\Gamma$-orbits on $Y \times Y$ coincide with the $G$-orbits. In particular, given $y \in \Min(g)$, we can find $\gamma_Y \in \Gamma$ such that $\gamma_Y(y, g\inv y) = (gy, y)$. Since $\aangle y {\gamma_Y\inv y} {\gamma_Y y} = \aangle y {g\inv y} {g y} = \pi$, we infer that $\gamma_Y$ is hyperbolic and has an axis containing the segment $[g\inv y, gy]$. In particular  $\gamma_Y $ acts as a hyperbolic isometry on $Y_i$ for all $i$. 

Let $\gamma_Y = (\alpha, h) $ be the decomposition of $\gamma_Y$ along the splitting $\Isom(X) = \Isom(M) \times \Isom(Y)$.  By construction $h$ acts as a hyperbolic isometry on $Y_i$ for all $i$. 

Let $U \leq \Isom(Y)$ be the pointwise stabiliser of a ball containing $y, \gamma_Y y $ and $\gamma_Y\inv y$. Notice that every element of $\Isom(Y)$ contained in the coset $Uh$ maps $y$ to $h.y$ and $h\inv y$ to $y$, and therefore acts also  as a hyperbolic isometry on $Y_i$ for all $i$. 

On the other hand  $U$ is a compact open subgroup of $\Isom(Y)$ by \cite[Theorem 1.2]{CMST}. Set $\Gamma_U = \Gamma \cap (\Isom(M)\times U)$. Notice that $\Gamma_U$ acts properly and cocompactly on $M$ by \cite[Lemma 3.2]{CMDS}. In other words the projection of $\Gamma_U$ to $\Isom(M)$ is a cocompact lattice. Abusing notation slightly, we shall denote this projection equally by $\Gamma_U$. 

By the appendix from  \cite{BL} (see also  \cite{P} for an alternative argument), the group $\Gamma_U$ contains an element $\gamma_M$ acting as an $\RR$-regular element on $M$.  
By \cite[Lemma 3.5]{PR} there is a Zariski open set $V = V(\gamma_M)$ in $\Isom(M)$ with the following property. For any $\delta \in V$ there exists $n_\delta$ such that an element $\gamma_M^n \delta$ is $\RR$-regular for any $n\geq n_\delta$. By the Borel density theorem, the intersection $\Gamma_U \cap V\alpha^{-1}$ is nonempty.  Pick an element $\delta \in \Gamma_U \cap V\alpha^{-1}$. Then $\delta \alpha\in V$ which means by definition that $\gamma_M^n \delta \alpha$ is $\RR$-regular for all $n\geq n_0$ for some integer $n_0$. 

Pick an element $\gamma'_M \in \Gamma$ (resp. $\delta' \in \Gamma$) which lifts $\gamma_M$ (resp. $\delta$). Set 
$$\gamma = (\gamma'_M)^{n_0} \delta'  \gamma_Y \in \Gamma_U.
$$ 
The projection of $\gamma$ to $\Isom(M)$ is $\gamma_M^{n_0} \delta \alpha$ and is thus $\RR$-regular. The projection of $\gamma$ to $\Isom(Y)$ belongs to the coset $Uh$, and therefore acts as a hyperbolic isometry on $Y_i$ for all $i$. 
\end{proof}

\begin{bibdiv}
\begin{biblist}
\bib{BS}{article}{
   author={Bangert, Victor},
   author={Schroeder, Viktor},
   title={Existence of flat tori in analytic manifolds of nonpositive
   curvature},
   journal={Ann. Sci. \'Ecole Norm. Sup. (4)},
   volume={24},
   date={1991},
   number={5},
   pages={605--634},
   issn={0012-9593},
}
\bib{BH}{book}{
   author={Bridson, Martin R.},
   author={Haefliger, Andr{\'e}},
   title={Metric spaces of non-positive curvature},
   series={Grundlehren der Mathematischen Wissenschaften [Fundamental
   Principles of Mathematical Sciences]},
   volume={319},
   publisher={Springer-Verlag},
   place={Berlin},
   date={1999},
   pages={xxii+643},
}
\bib{BL}{article}{
   author={Benoist, Yves},
   author={Labourie, Fran{\c{c}}ois},
   title={Sur les diff\'eomorphismes d'Anosov affines \`a\ feuilletages
   stable et instable diff\'erentiables},
   language={French, with French summary},
   journal={Invent. Math.},
   volume={111},
   date={1993},
   number={2},
   pages={285--308},
   issn={0020-9910},
}
\bib{CaSa}{article}{
   author={Caprace, Pierre-Emmanuel},
   author={Sageev, Michah},
   title={Rank rigidity for {CAT}(0) cube complexes},
   journal={Geom. Funct. Anal.},
   volume={21},
   date={2011},
   number={4},
   pages={851--891},
}
\bib{CMST}{article}{
   author={Caprace, Pierre-Emmanuel},
   author={Monod, Nicolas},
   title={Isometry groups of non-positively curved spaces: structure theory},
   journal={J. Topol.},
   volume={2},
   date={2009},
   number={4},
   pages={661--700},
   issn={1753-8416},
}
\bib{CMDS}{article}{
   author={Caprace, Pierre-Emmanuel},
   author={Monod, Nicolas},
   title={Isometry groups of non-positively curved spaces: discrete
   subgroups},
   journal={J. Topol.},
   volume={2},
   date={2009},
   number={4},
   pages={701--746},
   issn={1753-8416},
}
\bib{Gro}{article}{
   author={Gromov, Mikhail},
   title={Asymptotic invariants of infinite groups},
   conference={
      title={Geometric group theory, Vol.\ 2},
      address={Sussex},
      date={1991},
   },
   book={
      series={London Math. Soc. Lecture Note Ser.},
      volume={182},
      publisher={Cambridge Univ. Press},
      place={Cambridge},
   },
   date={1993},
   pages={1--295},
}
\bib{K}{article}{
   author={Kleiner, Bruce},
   title={The local structure of length spaces with curvature bounded above},
   journal={Math. Z.},
   volume={231},
   date={1999},
   number={3},
   pages={409--456},
   issn={0025-5874},
}
\bib{P}{article}{
   author={Prasad, Gopal},
   title={${\bf R}$-regular elements in Zariski-dense subgroups},
   journal={Quart. J. Math. Oxford Ser. (2)},
   volume={45},
   date={1994},
   number={180},
   pages={541--545},
   issn={0033-5606},
   review={\MR{1315463 (96a:22022)}},
}
\bib{PR}{article}{
   author={Prasad, Gopal},
   author={Raghunathan, Madabusi Santanam},
   title={Cartan subgroups and lattices in semi-simple groups},
   journal={Ann. of Math. (2)},
   volume={96},
   date={1972},
   pages={296--317},
   issn={0003-486X},
}
\bib{Rua}{article}{
   author={Ruane, Kim E.},
   title={Dynamics of the action of a ${\rm CAT}(0)$ group on the boundary},
   journal={Geom. Dedicata},
   volume={84},
   date={2001},
   number={1-3},
   pages={81--99},
   issn={0046-5755},
}
\bib{Sel}{article}{
   author={Selberg, Atle},
   title={On discontinuous groups in higher-dimensional symmetric spaces},
   conference={
      title={Contributions to function theory (internat. Colloq. Function
      Theory, Bombay, 1960)},
   },
   book={
      publisher={Tata Institute of Fundamental Research},
      place={Bombay},
   },
   date={1960},
   pages={147--164},
}
\bib{Swe}{article}{
   author={Swenson, Eric L.},
   title={A cut point theorem for ${\rm CAT}(0)$ groups},
   journal={J. Differential Geom.},
   volume={53},
   date={1999},
   number={2},
   pages={327--358},
   issn={0022-040X},
}
\end{biblist}
\end{bibdiv}

\end{document}